\documentclass[12pt]{amsart}
\usepackage{hyperref, tikz}
\date{\today}

\theoremstyle{plain}
\newtheorem{theorem}{Theorem}

\theoremstyle{definition}
\newtheorem{definition}{Definition}
\newtheorem{lemma}{Lemma}

\theoremstyle{remark}
\newtheorem{example}{Example}

\newcommand{\raag}[1][\Gamma]{{\mathrm{RAAG}_{#1}}}
\newcommand{\racg}[1][\Gamma]{{\mathrm{RACG}_{#1}}}
\newcommand{\gm}[1][\Gamma]{{\mathrm{GM}_{#1}}}

\newcommand{\graag}[1][\Gamma]{\mathrm{RAAG}^*_{#1}(t)}
\newcommand{\gracg}[1][\Gamma]{\mathrm{RACG}^*_{#1}(t)}
\newcommand{\ggm}[1][\Gamma]{\mathrm{GM}^*_{#1}(t)}

\newcommand{\lk}{\mathrm{Lk}}

\newcommand{\m}{\mathbf m}
\renewcommand{\vec}{\mathbf}
\newcommand{\ones}{\mathbf 1'}

\begin{document}
\title[Right-Angled Groups and Monoids]{Growth in Right-Angled Groups\\and Monoids}
\begin{abstract} We derive functional relationships between spherical generating functions of graph monoids, right-angled Artin groups and right-angled Coxeter groups.
We use these relationships to express the spherical generating function of a right-angled Artin group in terms of the clique polynomial of its defining graph.
We also describe algorithms for computing the geodesic generating functions of these structures.
\end{abstract}
\subjclass[2010]{20F36, 20F65, 20F55, 05A15, 08A50} 
\keywords{Right-angled Artin groups, right-angled Coxeter groups, Cartier-Foata monoids, growth functions, spherical generating functions, geodesic generating functions, branching rules}

\author{Jayadev S. Athreya}
\thanks{J.S.A. was partially supported by NSF PI grant DMS 1069153; NSF grants DMS 1107452, 1107263, 1107367  ``RNMS: Geometric structures and representation varieties" (the GEAR Network)"; and NSF CAREER grant DMS 1351853}
\address{Department of Mathematics, University of Illinois.
1409 W. Green Street, Urbana, IL 61801, USA.}
\author{Amritanshu Prasad}
\address{The Institute of Mathematical Sciences, CIT campus Taramani, Chennai 600113, India.}
\maketitle

\tableofcontents
\section{Overview}
\label{sec:overview}
Let $\Gamma$ be a finite graph with nodes $\m = \{1,\dotsc,m\}$.
Consider the set of equivalence classes of words in the alphabet $x_1,\dotsc,x_m$, modulo the relation where two words are said to be equivalent if each can be obtained from the other by a sequence of substitutions called shuffles:
\begin{equation}
  \label{eq:1}
  u x_i x_j v \to u x_j x_i v,
\end{equation}
for some words $u$ and $v$, and some and $i,j\in \m$ which are connected by an edge in $\Gamma$.
In other words, variables commute if and only if the corresponding nodes in the graph are connected by an edge.

Concatenation of words descends to a binary operation on equivalence classes of words, resulting in a monoid $\gm$ known as the graph monoid of $\Gamma$.
The identity element of this monoid is the empty word.
Such monoids were studied by Cartier and Foata \cite{MR0239978} and therefore go by the name of Cartier-Foata monoids.

The right-angled Artin group $\raag$ associated to $\Gamma$ is the group obtained by adding to $\gm$ the inverses of all its elements.
Concretely, it may be regarded as the set of equivalence classes of words in the $2m$ symbols $x_1,x_1^{-1},\dotsc,x_m,x_m^{-1}$, where, in addition to shuffles \eqref{eq:1}, substitutions of the form
\begin{equation}
  \label{eq:2}
  ux_i x_i^{-1} v \longleftrightarrow uv \quad\text{ and }\quad ux_i^{-1} x_i v \longleftrightarrow uv,
\end{equation}
for $i\in \m$ are allowed.
Right-angled Artin groups were introduced under the name of semifree groups by Baudisch \cite{MR634562} and were called graph groups by Servatius, Droms and Servatius \cite{MR952322}.
Of late, they have attracted a lot of attention in geometric group theory (see, for example, the survey article by Charney \cite{charney2007introduction}).

The right-angled Coxeter group $\racg$ is the quotient of $\raag$ by its smallest normal subgroup containing $x_i^2$ for every $i\in \m$.
In $\racg$, each $x_i$ is equal to its inverse, and therefore, just like in $\gm$, each element is represented by a word in the alphabet $x_1,\dotsc,x_m$.
Now, in addition to shuffles (\ref{eq:1}), substitutions of the form
\begin{equation}
  \label{eq:cox-canc}
  ux_i x_i v \longleftrightarrow uv,
\end{equation}
for each $i\in \m$ are allowed.
The group $\racg$ may be viewed as a graph product of cyclic groups of order two in the sense of Green \cite{green_thesis} and Hermiller and Meier \cite{MR1314099}.

The length of an element $x$ in any of the structures $\gm$, $\raag$ and $\racg$ is the minimal length of a word that represents it, and is denoted $l(x)$.
Thus, the identity element is the only element of length $0$ in all three structures.
\begin{example}
  \label{example:small-len}
  In $\gm$ and $\racg$ there are $m$ elements of length one, namely $x_1,\dotsc,x_m$, whereas in $\raag$ there are $2m$ elements of length one, namely $x_1^{\pm 1}, \dotsc, x_m^{\pm 1}$.
\end{example}
The spherical growth functions of these structures are defined as:
\begin{align*}
  \gm(t) = \sum_{x\in \gm} t^{l(x)},\\
  \raag(t) = \sum_{x\in \raag} t^{l(x)},\\
  \racg(t) = \sum_{x\in \racg}t^{l(x)}.
\end{align*}
The theory of Tits systems gives the rationality of spherical generating functions of Coxeter groups in general (see, for example \cite[Cor.~17.1.6]{Davis}).
The rationality of spherical generating functions for right angled Artin groups follows from the results of Loeffler, Meier and Worthington \cite{MR1949695}.

An important combinatorial invariant of a graph is its clique polynomial:
\begin{equation*}
  p_\Gamma(t) = 1 + c_1 t + c_2 t^2 + \dotsb,
\end{equation*}
where $c_i$ is the number of cliques in $\Gamma$ with $i$ nodes.
The spherical growth function of $\gm$ is has a nice expression in terms of $p_\Gamma(t)$:
\begin{equation}
  \label{eq:3}
  \gm(t) = \frac 1{p_\Gamma(-t)}.
\end{equation}
This formula can be obtained by substituting $T_i = t$ for every $i$ in the Cartier-Foata identity \cite[Ch.~1, Eq. (1)]{MR0239978}.
For another elegant (and completely elementary) proof of (\ref{eq:3}) see Fisher \cite{Fishy}.

The theory of Coxeter groups gives a similar expression for right-angled Coxeter groups; see Davis \cite[Prop.~17.4.2]{Davis}:
\begin{equation}
  \label{eq:4}
  \racg(t) = \frac 1{p_\Gamma\left(\frac{-t}{1+t}\right)}.
\end{equation}

The first main result of this article is a version of the formulae \eqref{eq:3} and \eqref{eq:4} for right-angled Artin groups:
\begin{equation}
  \label{eq:5}
  \raag(t) = \frac 1{p_\Gamma\left(\frac{-2t}{1+t}\right)}.
\end{equation}
\begin{example}
  Taking $\Gamma$ to be the complete graph on $m$ nodes gives a well-known identity:
  \begin{quote}
    The number of points in $\mathbf Z^m$ which lie on the $L^1$-sphere of radius $n$ centered at the origin is the coefficient of $t^n$ in $\left(\frac{1+t}{1-t}\right)^m$.
  \end{quote}
\end{example}
\begin{example}
  Taking $\Gamma$ to be the graph with $m$ vertices and no edges gives:
  \begin{quote}
    The number of words of length $n$ in a free group on $m$ generators is the coefficient of $t^n$ in $\frac{1+t}{1-(2m-1)t}$.
  \end{quote}
\end{example}
\begin{example}
  Taking $\Gamma$ to be the square graph (Fig.~\ref{fig:square}) yields the Cartesian square $F_2 \times F_2$ of the free group on two generators.
  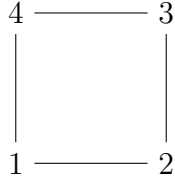
\begin{figure}[h]
    \centering
    \begin{tikzpicture}
      \path (-1, -1) node (n1) {$1$};
      \path (1, -1) node (n2) {$2$};
      \path (1, 1) node (n3) {$3$};
      \path (-1, 1) node (n4) {$4$};
      \draw
      (n1) edge (n2)
      (n2) edge (n3)
      (n3) edge (n4)
      (n4) edge (n1);
    \end{tikzpicture}
    \caption{The square graph}
    \label{fig:square}
  \end{figure}
The clique polynomial of this graph is $p_{\Gamma}(t) = (1+2t)^2$, and thus the spherical growth function of $F_2\times F_2$ with respect to its standard generators is $\left(\frac{1+t}{1-3t}\right)^2$.
\end{example}
\begin{example}
  The line graph $A_m$ (Fig.~\ref{fig:line}) with $m$ vertices has clique polynomial $1+mt+(m-1)t^2$.
  \begin{figure}[h]
    \centering
    \begin{tikzpicture}
      \path (0, 0) node (n1) {$1$};
      \path (1, 0) node (n2) {$2$};
      \path (2, 0) node (n3) {$3$};
      \path (6, 0) node (nm) {$m$};
      \draw
      (n1) edge (n2)
      (n2) edge (n3);
      \draw [dashed]
      (n3) edge (nm);
    \end{tikzpicture}
    \caption{The line graph $A_m$}
    \label{fig:line}
  \end{figure}
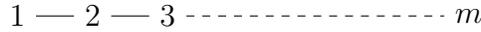
  Therefore the spherical growth function of the corresponding right-angled Artin group is
  \begin{equation*}
    \frac{(1+t)^2}{1-2(n-1)t + (2n-3)t^2}.
  \end{equation*}
\end{example}
The proof of Eq.~\eqref{eq:5} proceeds by deriving  a functional relationships between $\raag(t)$ and $\racg(t)$, namely:
\begin{equation}
  \label{eq:6}
  \raag(t) = \racg\left(\frac{2t}{1-t}\right),
\end{equation}
which allows for the deduction of \eqref{eq:6} from \eqref{eq:4}.
This functional relationship is based upon the comparison of branching rules in the sense of Prasad \cite{branching} for elements in $\raag$ and $\racg$ (see Lemma~\ref{lemma:branches} in the next section).
A similar comparison of branching rules also gives a functional relationship between $\gm$ and $\racg$:
\begin{equation}
  \label{eq:7}
  \gm(t) = \racg\left(\frac t{1-t}\right),
\end{equation}
which allows one to easily derive either of \eqref{eq:3} and \eqref{eq:4} from the other.

All the results in this article rely upon the classification of elements of our structures into types.
The type $\tau(x)$ of an element $x$ is a subset of the vertex-set of the graph $\Gamma$ (see Definition~\ref{sec:type-an-element}).
Let $M$ be any subset of the vertex-set of $\Gamma$.
A generalization of the Cartier-Foata identity due to Krattenthaler \cite[Theorem~4.1]{krattenthaler2006theory} gives rise to a refinement of the identity (\ref{eq:3}):
\begin{equation}
  \label{eq:Kratt}
  \sum_{x\in \gm, \tau(x)\subset M} t^{l(x)} = \frac{p_{\Gamma - M}(-t)}{p_\Gamma(-t)}.
\end{equation}
Here $p_{\Gamma - M}(t)$ is the clique polynomial of the induced subgraph of $\Gamma$ obtained after deleting the vertices in $M$.
The functional relations~(\ref{eq:functional1}) and (\ref{eq:functional2}) allow us to deduce similar identities for right-angled Artin groups and right-angled Coxeter groups:
\begin{gather}
  \label{eq:refined-racg}
  \sum_{x\in \racg, \tau(x)\subset M} t^{l(x)} = \frac{p_{\Gamma - M}\left(\tfrac{-t}{1+t}\right)}{p_\Gamma\left(\tfrac{-t}{1+t}\right)},\\
  \label{eq:refined-raag}
  \sum_{x\in \raag, \tau(x)\subset M} t^{l(x)} = \frac{p_{\Gamma - M}\left(\tfrac{-2t}{1+t}\right)}{p_\Gamma\left(\tfrac{-2t}{1+t}\right)}.
\end{gather}
  
A geodesic (or reduced) word in any of the structures $\gm$, $\raag$ and $\racg$ is any word which has minimal length among all the words that represent the same element.
We let $\gm^*$, $\raag^*$ and $\racg^*$ denote the sets of geodesic words in $\gm$, $\raag$ and $\racg$ respectively.
The geodesic generating functions for these structures are the functions:
\begin{align*}
  \ggm & = \sum_{x\in \gm^*}t^{l(x)},\\
  \graag & = \sum_{x\in \raag^*}t^{l(x)},\\
  \gracg & = \sum_{x\in \racg^*}t^{l(x)}.
\end{align*}
Our main result for geodesic generating functions is an algorithm for computing these geodesic generating functions.
Again, the algorithm is based on branching rules for words based on type.

When $\Gamma$ is link-regular, all types of the same cardinality can be clubbed together for the purpose of computing geodesic generating functions. By doing so, we recover a result of Antol\'in and Ciobanu \cite{1203.2752}  on the geodesic growth functions of link-regular graphs.

The formulae and algorithms described in this paper have been implemented using the Sage Mathematical Software \cite{sage}.
The code is available from \url{http://www.imsc.res.in/\~amri/growth/}.
This software allows the user to recover all the examples in \cite{1203.2752,MR1949695} with a few keystrokes. 
\section{Spherical Growth Functions}
\label{sec:type-an-element}

\begin{definition}
  \label{definition:type}
  The type of an element $x$ in $\gm$, $\raag$ or $\racg$ is the subset of $\m$ consisting of those indices $i$ such that $x$ is represented by a reduced word ending in $x_i$ or in $x_i^{-1}$.
  We write $\tau(x)$ for the type of $x$.
\end{definition}

In particular, the type of the identity element (which is represented by the empty word) is the empty set.

\begin{lemma}
  For each element $x$ in $\gm$, $\raag$ or $\racg$, the elements of $\tau(x)$ form a clique in $\Gamma$.
\end{lemma}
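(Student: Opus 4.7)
The plan is to show that any two distinct indices $i, j \in \tau(x)$ are joined by an edge in $\Gamma$. The central tool, which I would invoke without reproving, is the following classical fact: in each of the structures $\gm$, $\raag$ and $\racg$, two reduced words that represent the same element can be transformed into one another by a sequence of shuffles of the form \eqref{eq:1} alone (no insertions or cancellations are needed). For $\gm$ this is immediate from the construction; for $\raag$ it is a theorem of Servatius; for $\racg$ it is a special case of Matsumoto's theorem on reduced expressions in Coxeter groups. A useful consequence, which I would record at the outset, is that any two reduced words representing the same element have the same multiset of letters.

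With this in hand, fix $x$ and distinct indices $i, j \in \tau(x)$. Choose a reduced word $w = y_1 y_2 \cdots y_n$ for $x$ with $y_n = x_i^{\pm 1}$, and a reduced word $w'$ for $x$ ending in $x_j^{\pm 1}$. By the preliminary fact there is a finite sequence of shuffles converting $w$ into $w'$. Tracking the letter that occupies the final slot of $w'$ back through this sequence, one locates a specific occurrence of $x_j^{\pm 1}$, at some position $k < n$ of $w$, that is eventually carried by the shuffles to position $n$. My goal is to show that this $y_k = x_j^{\pm 1}$ commutes in $\Gamma$ with every $y_l$ for $k < l \le n$; specializing to $l = n$ will yield that the nodes $i$ and $j$ are adjacent, as desired.

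The key observation for this step is that a shuffle can swap two adjacent letters only when their underlying nodes are joined by an edge; equivalently, the relative order of any two letter-occurrences whose nodes are not adjacent is an invariant of the shuffle equivalence class. If some $y_l$ with $l > k$ had a node not adjacent to $j$, then the particular occurrences $y_k$ and $y_l$ would retain their relative order throughout the shuffle sequence, so $y_k$ could never overtake $y_l$ and reach the last position. This contradiction establishes the claim. Running the argument for every unordered pair in $\tau(x)$ shows that $\tau(x)$ is a clique.

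I expect the only real obstacle to be the invocation of the shuffles-only reducibility fact for $\raag$ and $\racg$; once that is accepted, the positional argument above is a short, essentially combinatorial verification, and no delicate bookkeeping about how the shuffles act on other letters is needed.
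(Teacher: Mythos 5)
Your proposal is correct and follows essentially the same route as the paper: cite the fact that all reduced words for a given element are related by shuffles alone (the paper uses Green's thesis for $\raag$/$\racg$ and Tits for the Coxeter case, matching your Servatius/Matsumoto references), and then observe that for a letter $x_j^{\pm 1}$ to be shuffled into the final position past $x_i^{\pm 1}$, the nodes $i$ and $j$ must be adjacent. Your occurrence-tracking argument is just a more detailed write-up of the paper's one-line version of this last step.
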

\begin{proof}
  This is clear for $\gm$, because all words representing the same element can be obtained from one another by a sequence of shuffles as in (\ref{eq:1}).
  If a word ending in $x_i$ can be changed into a word ending in $x_j$ using a sequence of shuffles, then at some stage $x_i$ has to be shuffled past $x_j$, so $i$ and $j$ must be joined by an edge in $\Gamma$.
  
  For $\raag$ and $\racg$, it turns out that all \emph{reduced} words representing the same element can be obtained from one another using a sequence of shuffles \cite[Theorem~3.9]{green_thesis}\footnote{For Coxeter groups, this is a well-known theorem of Tits \cite[Th\'eor\`eme~3]{Tits-words}.} and once again the lemma follows.
\end{proof}

\begin{lemma}
  \label{lemma:word-extn}
  For each element $x$ in $\gm$, $\raag$ or $\racg$, if $l(xx_i^{\pm 1})>l(x)$, then $\tau(xx_i^{\pm 1})$ is the unique maximal clique of $\tau(x)\cup \{i\}$ that contains $i$.
\end{lemma}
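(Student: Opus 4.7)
I want to show that $\tau(y) = \{i\} \cup \{j \in \tau(x) : j \text{ adjacent to } i \text{ in } \Gamma\}$, where $y := xx_i^{\pm 1}$. Because $\tau(x)$ is a clique by the previous lemma, this set is a clique, and it is easily verified to be the unique maximal clique of $\tau(x) \cup \{i\}$ containing $i$. As a preliminary, a single generator changes the length by at most $1$, so the hypothesis $l(y) > l(x)$ forces $l(y) = l(x) + 1$.

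For the forward containment, if $w$ is a reduced word for $x$, then $wx_i^{\pm 1}$ has length $l(y)$ and so is reduced, giving $i \in \tau(y)$. For $j \in \tau(x)$ adjacent to $i$, pick a reduced word $u x_j^{\pm 1}$ for $x$; then $u x_j^{\pm 1} x_i^{\pm 1}$ is a reduced word for $y$, and a single shuffle (legal because $j$ and $i$ are adjacent) moves $x_j^{\pm 1}$ to the end, placing $j$ in $\tau(y)$.

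The main obstacle is the reverse containment. Suppose $j \in \tau(y)$ with $j \neq i$. Fix a reduced word $w x_i^{\pm 1}$ for $y$ coming from a reduced word $w$ for $x$, and fix a reduced word $w' x_j^{\pm 1}$ for $y$. Because reduced words representing the same element are shuffle-equivalent (immediate for $\gm$; Tits and Green for $\racg$ and $\raag$, as already invoked in the proof of the previous lemma), there is a sequence of shuffles carrying $w x_i^{\pm 1}$ to $w' x_j^{\pm 1}$. Tracking the original terminal letter $x_i^{\pm 1}$ of $w x_i^{\pm 1}$ to its final location, write $w' = u_1 \, x_i^{\pm 1} \, u_2$. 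Every letter of $u_2 x_j^{\pm 1}$ started strictly to the left of the tracked $x_i^{\pm 1}$ and ended strictly to its right, hence was shuffled past $x_i^{\pm 1}$ at some point, forcing its index to be adjacent to $i$ in $\Gamma$; in particular $j$ is adjacent to $i$. Commuting $x_i^{\pm 1}$ rightward through $u_2 x_j^{\pm 1}$ via these same shuffles yields a reduced word $u_1 u_2 x_j^{\pm 1} x_i^{\pm 1}$ for $y$; its prefix $u_1 u_2 x_j^{\pm 1}$ therefore has length $l(x)$ and, by right cancellation of $x_i^{\pm 1}$ (valid in all three structures, the monoid case being a standard property of trace monoids), represents $x$. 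Hence $j \in \tau(x)$, and the two containments together prove the lemma. The delicate point throughout is the transport of the reduced-word-ending-in-$x_j^{\pm 1}$ property from $y$ back to $x$; once the shuffle-equivalence of reduced words is in hand, the remaining shuffles are essentially forced by the position of the tracked terminal letter.
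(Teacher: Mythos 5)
Your proof is correct and follows essentially the same route as the paper: append $x_i^{\pm 1}$ to a reduced word for $x$ (reduced since $l(xx_i^{\pm 1})=l(x)+1$), then use shuffle-equivalence of reduced words (Green/Tits) to identify $\tau(xx_i^{\pm 1})$ as $i$ together with the neighbours of $i$ in $\tau(x)$. Your letter-tracking argument and the right-cancellation step (cancellativity of the trace monoid for $\gm$) simply make explicit the ``only if'' direction that the paper's proof states tersely.
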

\begin{proof}
  Since $l(xx_i^{\pm 1})>l(x)$, a reduced word for $xx_i^{\pm 1}$ can be obtained by appending $x_i^{\pm 1}$ to a reduced word for $x$.
  Now $x_j^{\pm 1}$ can be shuffled to the end of $xx_i^{\pm 1}$ if and only if either $j = i$, or $x_j\in \tau(x)$ and $x_j$ can be shuffled past $x_i$.
  In other words, $\tau(xx_i^{\pm 1})$ consists of those elements of $\tau(x)\cup \{i\}$ which share an edge with the node $i$ of $\Gamma$, which is nothing but the unique maximal clique in $\tau(x)\cup \{i\}$ containing $i$.
\end{proof}
\begin{definition}
  Let $x$ be an element of $\gm$, $\raag$ or $\racg$ of positive length.
  Let $i = \max \tau(x)$.
  Then $x$ has a reduced word ending in $x_i^{\epsilon}$ where $\epsilon = \pm 1$.
  The element $x' = x x_i^{-\epsilon}$ is called the parent of $x$ and $x$ is called a child of $x'$.
\end{definition}
While each element has a unique parent, it can have several children.
The parent-child relationship for elements can be understood in terms of the following notion of branching for cliques (see Lemma~\ref{lemma:branches}):
\begin{definition}
  \label{definition:branching}
  Let $C$ and $C'$ be cliques in $\Gamma$.
  Say that $C'$ branches to $C$ (denoted  $C'\to C$) if the following conditions hold:
  \begin{enumerate}
  \item $C - C'$ is a singleton set (call its unique element $i$),
  \item $C$ is the maximal clique in $C'\cup \{i\}$ containing $i$, and
  \item $i>j$ for every $j\in C$.
  \end{enumerate}
  We write $C'\to C$.
\end{definition}
Note that if $i>j$ for every $j\in C'$ then the maximal clique containing $i$ in $C'\cup \{i\}$ is the unique branch of $C'$ with $C-C' = \{i\}$.
Otherwise, $C'$ has no branches with $C'-C = \{i\}$.
\begin{lemma}
  \label{lemma:branches}
  Let $C'$ be a clique of $\Gamma$.
  The children of an element of type $C'$ are described by the following rules:
  \begin{enumerate}
  \item If $x'\in \gm$ has type $C'$ then $x'$ has a unique child of type $C$ for every branch $C$ of $C'$, one child of type $C'$ and no other children.
  \item If $x'\in \raag$ has type $C'$ then $x'$ has two children of type $C$ for every branch $C$ of $C'$, one child of type $C'$ and no other children.
  \item If $x'\in \racg$ has type $C'$ then $x'$ has one child of type $C$ for every branch $C$ of $C'$ and no other children.
  \end{enumerate}
\end{lemma}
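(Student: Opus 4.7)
The plan is to analyze, for each $i\in \m$ and each admissible sign $\epsilon$, whether $x := x'x_i^\epsilon$ is a child of $x'$, and if so to compute $\tau(x)$. By definition $x$ is a child of $x'$ precisely when $l(x)=l(x')+1$ and $i = \max\tau(x)$. The key combinatorial tool is Lemma~\ref{lemma:word-extn}: once the length extends, $\tau(x)$ is forced to equal the unique maximal clique of $C'\cup\{i\}$ containing $i$, which I will denote $C(i)$. The analysis splits on whether $i\notin C'$ or $i\in C'$.

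First suppose $i\notin C'$. No cancellation can occur: in $\gm$ there are no inverses, and in $\raag$ or $\racg$ any cancellation would require $x'$ to admit a reduced word ending in $x_i^{\pm 1}$, forcing $i\in C'$. So $l(x)=l(x')+1$ and $\tau(x)=C(i)$. The requirement $i = \max C(i)$, combined with $C(i)-C'=\{i\}$, is precisely the branching relation $C' \to C(i)$ of Definition~\ref{definition:branching}. Hence the children coming from $i\notin C'$ are indexed by branches of $C'$: one child in $\gm$ and $\racg$, and two children in $\raag$ (the distinct elements $x'x_i$ and $x'x_i^{-1}$, whose reduced words have different final letters).

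Now suppose $i\in C'$. Because $C'$ is a clique containing $i$, every $j\in C'$ is adjacent to $i$, which lets one show by a direct shuffle argument that whenever $x'x_i^\epsilon$ fails to cancel, its type is still $C'$. In $\racg$, however, $i\in\tau(x')$ forces $x'x_i$ to collapse to length $l(x')-1$, so no child arises. In $\gm$ concatenation never cancels, so $x'x_i$ has type $C'$ and yields a child exactly when $i=\max C'$, producing one child of type $C'$. In $\raag$, the theorem that reduced words for a single element differ only by shuffles, combined with the sign-preserving nature of shuffles, shows that exactly one sign $\epsilon\in\{\pm 1\}$ extends the length, again giving one child of type $C'$ when $i=\max C'$. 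The main obstacle is really the case $i\in C'$, where one must verify both the uniqueness of the extending sign in $\raag$ and the identity $\tau(x'x_i^\epsilon)=C'$; both facts rest on the shuffle-equivalence of reduced words invoked in the previous lemma.
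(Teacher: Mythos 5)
Your proof is correct and follows essentially the same route as the paper: enumerate the potential children as $x'x_i^{\pm 1}$, use Lemma~\ref{lemma:word-extn} to identify their types, and split into cases according to whether the appended letter lies in $C'$. In fact your handling of the case $i\notin C'$ is more complete than the printed proof, which only examines letters $i$ with $i>\max C'$ and asserts that no other letter produces a child. That assertion is too strong: with condition (3) of Definition~\ref{definition:branching} read as $i=\max C$ (the reading you adopt, and the one under which the lemma and the ensuing recursions for $m_n(C)$, $a_n(C)$, $c_n(C)$ are actually true), a letter $i\notin C'$ with $i<\max C'$ can still yield a child, namely when $i$ is the largest element of the maximal clique of $C'\cup\{i\}$ containing $i$. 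For example, on the graph with vertices $1,2,3$ and edges $\{1,2\}$, $\{1,3\}$, the element $x'=x_1x_3\in\gm$ has type $\{1,3\}$, and $x_1x_3x_2$ is a child of $x'$ of type $\{1,2\}$ even though $2<3$. Your criterion that $x'x_i^{\epsilon}$ is a child exactly when the length extends and $i=\max\tau(x'x_i^{\epsilon})$, combined with Lemma~\ref{lemma:word-extn}, captures precisely these children; and your sign-uniqueness argument in $\raag$ (shuffles preserve the sign of the last letter with a given index, so only one sign extends the length when $i\in C'$) supplies the justification for the ``unique child of type $C'$'' count that the paper states without detail. So there is no gap in your argument; if anything it closes a small one in the paper's own proof.
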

\begin{proof}
  The proofs of the three assertions are slight variations on the same theme:

  Suppose that $x'\in \gm$ has type $C'$.
  Let $j = \max C'$.
  Then $x'x_j$ is a child of $x'$ of type $C'$.
  If $i>j$, then by Lemma~\ref{lemma:word-extn}, $x'x_i$ is a child of $x'$ of type $C$ where $C$ is the branch of $C'$ with $C - C' = \{i\}$.
  For all other values of $i$, $x'x_i$ is not a child of $x'$.
  This proves the assertion for $\gm$.

  Suppose $x'\in \raag$ and $\tau(x') = C'$.
  Let $i = \max C'$.
  If $x'$ has a reduced word ending in $x_i$, then $xx_i$ is a child of $x$ of type $C'$.
  If $x'$ has a reduced word ending in $x_i^{-1}$, then $xx_i^{-1}$ is again child of $x$ of type $C'$.
  In either case, $x'$ has a unique child of type $C'$.
  On the other hand, if $i>j$ for every $j\in C'$, then $xx_i$ and $xx_i^{-1}$ are both children of type $C$ where $C$ is the branch of $C'$ with $C - C' = \{i\}$.
  For $i<j$, $x'x_i^{\pm 1}$ is not a child of $x'$.
  This proves the assertion for $\raag$.
  
  Suppose $x'\in \racg$ and $\tau(x') = C'$.
  Let $j = \max C'$.
  Since $x'$ has a reduced word ending in $x_j$, $l(xx_i)<l(x)$, as $x_j^2 = 1$.
  On the other hand, if $i>j$ for every $j\in C'$, then $xx_i$ is a child of $x'$ type $C$ where $C$ is the branch of $C'$ with $C - C' = \{i\}$.
  For $i<j$, $x'x_i$ is not a child of $x'$.
  This proves the assertion for $\racg$.
\end{proof}

For any clique $C$ in $\Gamma$, let $m_n(C)$, $a_n(C)$ and $c_n(C)$ denote the number of elements of length $n$ and type $C$ in $\gm$, $\raag$ and $\racg$ respectively.
Lemma~\ref{lemma:branches} implies that
\begin{align*}
  m_n(C) & = m_{n-1}(C) + \sum_{C'\to C} m_{n-1}(C'),\\
  a_n(C) & = a_{n-1}(C) + 2\sum_{C'\to C} a_{n-1}(C'),\\
  c_n(C) & = \sum_{C'\to C} c_{n-1}(C').
\end{align*}

Enumerate all the non-empty cliques of $\Gamma$ in some order: $C_1,C_2,\dotsc$.
Let $\vec m_n$, $\vec a_n$ and $\vec c_n$ be the column vectors whose $i$th entries are $m_n(C_i)$, $a_n(C_i)$ and $c_n(C_i)$ respectively.
By Example~\ref{example:small-len}, $\vec a_1 = 2\vec m_1 = 2\vec c_1$.

Let $B_0$ be the matrix whose $(i,j)$th entry is given by
\begin{equation*}
  B_0(i,j) =
  \begin{cases}
    1 & \text{if } C_j\to C_i,\\
    0 & \text{otherwise}.
  \end{cases}
\end{equation*}
Lemma~\ref{lemma:branches} can be expressed in matrix form as follows:
\begin{align*}
  \vec m_n & = (I + B_0)\vec m_{n-1},\\
  \vec a_n & = (I + 2B_0)\vec a_{n-1},\\
  \vec c_n & = B_0 \vec c_{n-1}.
\end{align*}
Iterating these identities gives
\begin{align*}
  \vec m_n & = (I + B_0)^{n-1} \vec m_1,\\
  \vec m_n & = (I + 2B_0)^{n-1} \vec a_1,\\
  \vec c_n & = B_0^{n-1} \vec c_1,
\end{align*}
giving rational expressions for vector-valued generating functions:
\begin{align}
  \label{eq:8}
  \sum_{n=1}^\infty \vec m_n t^n & = t[I-(I + B_0)t]^{-1} \vec m_1,\\
  \label{eq:9}
  \sum_{n=1}^\infty \vec m_n t^n & = t[I-(I + 2B_0)t]^{-1} \vec a_1,\\
  \label{eq:10}
  \sum_{n=1}^\infty \vec c_n t^n & = t[I-B_0t]^{-1} \vec c_1.
\end{align}
Furthermore, functional relationships between such generating functions are obtained:
\begin{align*}
  \sum_{n = 1}^\infty \vec m_n t^n  & = \sum_{n=1}^\infty  (I + B_0)^{n-1}t^n\vec m_1\\
  & = \sum_{n=1}^\infty \sum_{k=0}^\infty \binom{n-1}k B_0^k t^n \vec m_1\\
  & = \sum_{n=1}^\infty \sum_{k=1}^\infty \binom{n-1}{k-1} B_0^{k-1}t^n\vec m_1\\
  & = \sum_{k=1}^\infty B_0^{k-1} \sum_{n=1}^\infty \binom{n-1}{k-1}t^N\vec m_1\\
  & = \sum_{k=1}^\infty B_0^{k-1} \left(\frac t{1-t}\right)^k \vec m_1\\
  & = \sum_{k=1}^\infty \vec c_k \left(\frac t{1-t}\right)^k.
\end{align*}
Similarly, one shows
\begin{equation*}
  \sum_{n=1}^\infty \vec a_n t^n  = \sum_{k=1}^\infty \vec c_k \left(\frac{2t}{1-t}\right)^k.
\end{equation*}
For each clique $C$ in $\Gamma$ consider the generating functions
\begin{align*}
  \gm^C(t) & = \sum_{n=1}^\infty m_n(C)t^n,\\
  \raag^C(t) & = \sum_{n=1}^\infty a_n(C)t^n \text{ and}\\
  \racg^C(t) & = \sum_{n=1}^\infty c_n(C)t^n.
\end{align*}
Then the calculations above show that
\begin{align}
  \label{eq:functional1}
  \gm^C(t) & = \racg^C\left(\frac t{1-t}\right)\\
  \label{eq:functional2}
  \raag^C(t) & = \racg^C\left(\frac{2t}{1-t}\right).
\end{align}
Summing over all cliques $C$ in $\Gamma$ give the identities \eqref{eq:6} and \eqref{eq:7} from Section~\ref{sec:overview}.
Inverting the identity \eqref{eq:7} gives $\racg^C(t) = \gm^C(t/(1+t))$.
Substituting this into \eqref{eq:6} gives $\raag^C(t) = \gm^C(2t/(1+t))$.
Thus the expression \eqref{eq:5} for the spherical generating function of $\raag$ follows from the corresponding expression \eqref{eq:3} for $\gm$, and similarly, the identities \eqref{eq:refined-racg} and \eqref{eq:refined-raag} follow from \eqref{eq:Kratt}.

\section{Geodesic Growth Functions}
\label{sec:geod-growth-funct}
The parent-child relationship for words is simpler than for elements:
\begin{definition}
Let $w$ be a word in $\gm$, $\raag$ or $\racg$.
Then the word $w'$ obtained by removing the last letter of $w$ is called the parent of $w$.
The word $w$ is said to be a child of $w'$.
\end{definition}
The analog of Lemma~\ref{lemma:branches} for words uses a weaker notion of branching for cliques:
\begin{definition}
  \label{definition:weak-branching}
  Let $C$ and $C'$ be cliques in $\Gamma$.
  Say that $C'$ branches weakly to $C$ (denoted  $C'\to C$) if the following conditions hold:
  \begin{enumerate}
  \item $C - C'$ is a singleton set (call its unique element $i$),
  \item $C$ is the maximal clique in $C'\cup \{i\}$ containing $i$, and
  \end{enumerate}
  We write $C'\rightsquigarrow C$ if $C'$ branches weakly to $C$.
\end{definition}
The only difference between weak branching defined here and the notion of branching in Definition~\ref{definition:branching} is that the third condition of Definition~\ref{definition:branching} has been dropped.

The notion of type can be extended from elements of $\gm$, $\raag$ and $\racg$ to reduced words in these structures: the type of a word is just the type of the element that it represents.
\begin{lemma}
  \label{lemma:word-branching}
  Let $C'$ be a clique of $\Gamma$.
  The children of a word $w'$ of type $C'$ are described by the following rules:
  \begin{enumerate}
  \item If $w'\in \gm$ has type $C'$ then $w'$ has a unique child of type $C$ for every weak branch $C$ of $C'$, $|C'|$ children of type $C'$ and no other children.
  \item If $w'\in \raag$ has type $C'$ then $w'$ has two children of type $C$ for every weak branch $C$ of $C'$, $|C'|$ children of type $C'$ and no other children.
  \item If $w'\in \racg$ has type $C'$ then $w'$ has one child of type $C$ for every weak branch $C$ of $C'$ and no other children.
  \end{enumerate}
\end{lemma}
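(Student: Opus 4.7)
The plan is to adapt the proof of Lemma~\ref{lemma:branches} to the word level. Fix a reduced word $w'$ of type $C'$ representing an element $x'$. A child of $w'$ is an extension $w'x_i$ (or $w'x_i^{\pm 1}$ in $\raag$) that remains reduced; its type is determined by the element it represents, which by Lemma~\ref{lemma:word-extn} is the unique maximal clique in $C'\cup\{i\}$ containing $i$ whenever length strictly increases. So in each case it suffices to decide, for each generator and each sign, whether appending it keeps the word reduced; the type then follows automatically.

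The analysis splits into $i\notin C'$ and $i\in C'$. When $i\notin C'$, I claim the extension is always reduced in all three structures. The only way $w'x_i^{\pm 1}$ could fail to be reduced is if $x_i^{\pm 1}$ could be shuffled leftward to meet a cancelling letter (another $x_i$ in $\racg$, or $x_i^{\mp 1}$ in $\raag$); but such a shuffle would require $i$ to commute with every letter in between, so the cancelling letter would itself be shufflable to the end of $w'$, forcing $i\in\tau(x')=C'$, a contradiction. By Lemma~\ref{lemma:word-extn} the resulting type is the weak branch $C$ of $C'$ with $C-C'=\{i\}$. This yields one such child in $\gm$ and $\racg$, and two in $\raag$ (one for each sign).

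When $i\in C'$, the three cases diverge. In $\gm$ there is no cancellation at all, so $w'x_i$ is always reduced and, again by Lemma~\ref{lemma:word-extn}, has type equal to the maximal clique in $C'\cup\{i\}=C'$ containing $i$, namely $C'$; this gives $|C'|$ children of type $C'$. In $\racg$, the relation $x_i^2=1$ together with the existence (by definition of type) of a reduced representative of $x'$ ending in $x_i$ shows that $w'x_i$ represents an element of length $l(w')-1$, so it is never a child. In $\raag$, the Tits--Green theorem cited in Section~\ref{sec:type-an-element} (reduced words for a given element are shuffle-equivalent, hence share a multiset of letters) ensures that there is a well-defined sign $\epsilon_i\in\{\pm 1\}$ such that every reduced representative of $x'$ ending in $x_i^{\pm 1}$ actually ends in $x_i^{\epsilon_i}$; appending $x_i^{-\epsilon_i}$ then yields a non-reduced word (its element has length $l(w')-1$), whereas appending $x_i^{\epsilon_i}$ keeps reducedness by the same shuffle-cancellation argument used above. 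This gives exactly $|C'|$ children of type $C'$.

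The delicate step is the $\raag$ case with $i\in C'$: one must invoke shuffle-equivalence to extract the canonical sign $\epsilon_i$ and to certify that no hidden cancellation arises in $w'x_i^{\epsilon_i}$ beyond what is already controlled by the reducedness of $w'$. Everything else then reduces to the type computation of Lemma~\ref{lemma:word-extn} applied to the two cases $i\in C'$ (giving type $C'$) and $i\notin C'$ (giving the weak branch $C$), matching the claimed counts.
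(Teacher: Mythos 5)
Your argument is correct and is essentially the proof the paper has in mind: the authors omit it as ``similar to Lemma~\ref{lemma:branches}'', and your case split $i\in C'$ versus $i\notin C'$, with the types computed via Lemma~\ref{lemma:word-extn} and reducedness checked by a shuffle/cancellation argument, is exactly that adaptation. One small repair in the $\raag$ case with $i\in C'$: sharing a multiset of letters does not by itself force the well-defined sign $\epsilon_i$, since a reduced word may contain both $x_i$ and $x_i^{-1}$; instead observe that shuffles never transpose two letters carrying the same index, so the sign of the last index-$i$ letter is an invariant of all reduced expressions of $x'$, and the cited shuffle-equivalence theorem then gives $\epsilon_i$ (and hence the count of $|C'|$ children of type $C'$) directly.
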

\begin{proof}
  The proof of these assertions is quite similar to that of Lemma~\ref{lemma:branches}, and is omitted.
\end{proof}
For each clique $C$, let $m^*_n(C)$, $a^*_n(C)$ and $c^*_n(C)$ denote the number of reduced words of length $n$ and type $C$ in $\gm$, $\raag$ and $\racg$ respectively.
Then Lemma~\ref{lemma:word-branching} tells us that
\begin{align*}
  m^*_n(C) & = |C|m^*_{n-1}(C) + \sum_{C'\rightsquigarrow C} m^*_{n-1}(C'),\\
  a^*_n(C) & = |C|a^*_{n-1}(C) + 2\sum_{C'\rightsquigarrow C} a^*_{n-1}(C'),\\
  c^*_n(C) & = \sum_{C'\rightsquigarrow C} c^*_{n-1}(C')
\end{align*}
As in Section~\ref{sec:type-an-element}, enumerate the non-empty cliques of $\Gamma$ as $C_1, C_2,\dotsc$.
Let $B_1$ be the matrix whose $(i,j)$th entry is given by
\begin{equation*}
  B_1(i,j) =
  \begin{cases}
    1 & \text{if } C_j\rightsquigarrow C_i,\\
    0 & \text{otherwise}.
  \end{cases}
\end{equation*}
Also, let $D$ denote the diagonal matrix whose $(i,i)$th entry is $|C_i|$.
Let $\vec m^*_n$, $\vec a^*_n$ and $\vec c^*_n$ denote the column vectors whose $i$th entries are $m^*_n(C_i)$, $a^*_n(C_i)$ and $c^*_n(C_i)$ respectively.
Then $\vec m^*_1 = \vec m_1$, $\vec a^*_1 = \vec a_1$ and $\vec c^*_1 = \vec c_1$.

Moreover, the above recurrences for these vectors can be written in matrix form as
\begin{align*}
  \vec m^*_n & = (D + B_1)\vec m^*_{n-1},\\
  \vec a^*_n & = (D + 2B_1)\vec a^*_{n-1},\\
  \vec c^*_n & = B_1 \vec c^*_{n-1}.
\end{align*}
giving, as in Section~\ref{sec:type-an-element}, rational expressions for vector-valued generating functions:
\begin{align}
  \label{eq:8}
  \sum_{n=1}^\infty \vec m^*_n t^n & = t[I-(D + B_1)t]^{-1} \vec m_1,\\
  \label{eq:9}
  \sum_{n=1}^\infty \vec m^*_n t^n & = t[I-(D + 2B_1)t]^{-1} \vec a_1,\\
  \label{eq:10}
  \sum_{n=1}^\infty \vec c^*_n t^n & = t[I-B_1t]^{-1} \vec c_1.
\end{align}
Adding them up gives a rational expression for the geodesic generating functions:
\begin{align*}
  \ggm & = 1 + \ones t[I-(D + B_1)t]^{-1} \vec m_1,\\
  \graag & = 1 + \ones t[I-(D + 2B_1)t]^{-1} \vec a_1,\\
  \gracg & = 1 + \ones t[I-B_1t]^{-1} \vec c_1,
\end{align*}
where $\ones$ denotes the all-ones row vector of dimension equal to the number of non-empty cliques in $\Gamma$.

Since $D$ and $B_1$ do not usually commute, it is not possible to take the binomial expansion of $(D + B_1)^n$ and obtain anything like the functional relations \eqref{eq:functional1} and \eqref{eq:functional2}.

\section{Link-Regular Graphs}
\label{sec:link-regular-graphs}
Recall that the link of a clique $C$ in $\Gamma$ is the set of all nodes in $\Gamma$ which are not in $C$, but which are connected by an edge to each of the nodes in $C$.
We write $\lk(C)$ for the link of $C$.
\begin{definition}[Link-regularity]
  The graph $\Gamma$ is said to be link regular if any two cliques of the same cardinality have links of the same cardinality.
\end{definition}
The computation of geodesic generating functions for link-regular graphs can be simplified considerably by clubbing together all the types which are cliques of the same cardinality.

\begin{lemma}
  \label{lemma:branching-reg}
  Let $\Gamma$ be a link-regular graph.
  If $C_1'$ and $C_2'$ are cliques of the same cardinality in $\Gamma$, then for each non-negative integer $r$, the number of $r$-cliques that are weak branches of $C_1'$ is equal to the number of $r$-cliques that are weak branches of $C_2'$.
\end{lemma}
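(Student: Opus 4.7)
The plan is to characterize weak branches of a clique $C'$ via the vertex added, then reduce the count to a quantity that depends only on $|C'|$ by invoking link-regularity through an inclusion-exclusion.

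First I would observe that by Definition~\ref{definition:weak-branching}, the weak branches $C$ of $C'$ are in bijection with the vertices $i \in \m \setminus C'$: given such an $i$, the corresponding weak branch is $C = \{i\} \cup (C' \cap \mathrm{N}(i))$, where $\mathrm{N}(i)$ denotes the set of neighbors of $i$ in $\Gamma$. (This is a clique because $C' \cap \mathrm{N}(i) \subseteq C'$ is a clique whose vertices are all adjacent to $i$.) In particular, $|C| = 1 + |C' \cap \mathrm{N}(i)|$, so the number of $r$-clique weak branches of $C'$ equals
\begin{equation*}
f_{C'}(r-1) := |\{ i \in \m \setminus C' : |C' \cap \mathrm{N}(i)| = r-1 \}|.
\end{equation*}

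Next, for each subset $S \subseteq C'$, I would set $N(S) = \{i \in \m \setminus C' : S \subseteq \mathrm{N}(i)\}$. Since $C'$ is a clique, every vertex in $C' \setminus S$ lies in $\lk(S)$, and conversely $\lk(S) \cap C' = C' \setminus S$. Therefore
\begin{equation*}
|N(S)| = |\lk(S)| - (|C'| - |S|).
\end{equation*}
By link-regularity, $|\lk(S)|$ depends only on $|S|$; call this common value $L(|S|)$. Hence $|N(S)|$ depends only on $|S|$ and $|C'|$.

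Now the key combinatorial identity, obtained by double-counting pairs $(S, i)$ with $S \subseteq C' \cap \mathrm{N}(i)$ and $|S| = k$, reads
\begin{equation*}
\sum_{i \in \m \setminus C'} \binom{|C' \cap \mathrm{N}(i)|}{k} = \sum_{\substack{S \subseteq C' \\ |S| = k}} |N(S)| = \binom{|C'|}{k}\bigl(L(k) - |C'| + k\bigr).
\end{equation*}
The right-hand side depends only on $|C'|$ and $k$, so the quantities $N_{\ge}(k) := \sum_j f_{C'}(j)\binom{j}{k}$ depend only on $|C'|$ and $k$. By the standard binomial inversion $f_{C'}(j) = \sum_{k \ge j} (-1)^{k-j}\binom{k}{j} N_{\ge}(k)$, the values $f_{C'}(j)$ depend only on $|C'|$ and $j$; taking $j = r-1$ proves the lemma.

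The argument is essentially a bookkeeping computation; the only place where any thought is required is the identification $\lk(S) \cap C' = C' \setminus S$, which lets the link sizes of subcliques of $C'$ be expressed cleanly in terms of the data controlled by link-regularity. I do not expect a real obstacle here — once that identification is in place, binomial inversion closes the argument.
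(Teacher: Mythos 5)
Your proof is correct and follows essentially the same route as the paper: you parametrize weak branches by the outside vertex $i$ and its trace $C'\cap \mathrm{N}(i)$ on the clique, use link-regularity through the identity $|\lk(S)\cap C'| = |C'|-|S|$ to count the ``at least $S$'' sets (the paper's $G_C(S)$, your $N(S)$), and then invert. The only cosmetic difference is that you aggregate over all $S$ of a given size before applying binomial inversion, whereas the paper applies inclusion--exclusion to each $F_C(S)$ individually and sums afterwards.
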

\begin{proof}
  For each non-negative integer $r$, let  $L_r$ denote the cardinality of the link of any $r$-clique in $\Gamma$.
  Fix a $k$-clique $C$.
  For each subset $S\subset C$, define
  \begin{equation*}
    F_C(S) = \{x\in \Gamma - C \mid \lk(x)\cap C = S\}.
  \end{equation*}
  It follows from Definition~\ref{definition:weak-branching} that:
  \begin{equation}
    \label{eq:quote}
    \text{$C$ branches to $S\cup \{x\}$ weakly if and only if $x\in F_C(S)$.}
  \end{equation}

  Define
  \begin{equation*}
    G_C(S) = \{x\in \Gamma - C\mid \lk(x)\cap C \supset S\}.
  \end{equation*}
  Then $G_C(S)$ consists of all the points of $\Gamma - C$ which lie in $\lk(S)$.
  If $|S| = r$, then
  \begin{equation}
    \label{eq:G}
    |G_C(S)| = L_r - (k-r),
  \end{equation}
  which depends only on $k$ and $r$ (and not on the particular choices of cliques $C$ and $S$ of cardinality $k$ and $r$ respectively).
  Write $G_k(r)$ for $G_C(S)$.

  By the principle of inclusion and exclusion,
  \begin{align*}
    F_C(S) & = \sum_{T\supset S}^k (-1)^{|T - S|} G(T)\\
    & = \sum_{i=r}^k (-1)^{i-r}\binom{k-r}{i-r} G_k(i),
  \end{align*}
  which is again independent of $C$ and $S$, so long as their cardinalities are preserved.
  Write $F_k(r)$ for $F_C(S)$.

  By (\ref{eq:quote}), a $k$-clique weakly branches to $\binom kr F_k(r)$ many $r+1$-cliques, and the Lemma follows.
\end{proof}

Let $\bar m_n(k)$, $\bar a_n(k)$ and $\bar c_n(k)$ denote the numbers of words of length $n$ and having type of cardinality $k$.
Then combining Lemma~\ref{lemma:word-branching} with Lemma~\ref{lemma:branching-reg} gives:
\begin{align*}
  \bar m_n(i) & = [i + F_j(i-1)]m_{n-1}(j),\\
  \bar a_n(i) & = [i + 2F_j(i-1)]a_{n-1}(j),\\
  \bar c_n(i) & = F_j(i-1)c_{n-1}(j)
\end{align*}
for $n\geq 2$.

Let $d$ be the cardinality of the largest clique of $\Gamma$.
Let $\bar B_1$ be the $d\times d$ matrix whose $(i,j)$th entry is given by
\begin{equation*}
  \bar B_1(i,j) = F_j(i-1).
\end{equation*}
Let $\bar D$ be the diagonal matrix with diagonal entries $1, 2,\dotsc, d$.
Let $\bar{\vec m}_n$, $\bar{\vec a}_n$ and $\bar{\vec c}_n$ denote the column vectors of length $d$ whose $i$th coordinates are $m_n(i)$, $a_n(i)$ and $c_n(i)$ respectively, for $i = 1,\dotsc, d$. 
Then $\bar{\vec m}_1=\bar{\vec c}_1$ is the vector $(m,0,\dotsc)$ and let $\bar{\vec a}_1 = 2\bar{\vec m}_1$.

Then as in Section~\ref{sec:geod-growth-funct}, we obtain rational expressions for vector-valued generating functions:
\begin{align}
  \label{eq:8a}
  \sum_{n=1}^\infty \bar{\vec m}_n t^n & = t[I-(\bar D + \bar B_1)t]^{-1} \bar{\vec m}_1,\\
  \label{eq:9a}
  \sum_{n=1}^\infty \bar{\vec m}_n t^n & = t[I-(\bar D + 2\bar B_1)t]^{-1} \bar{\vec a}_1,\\
  \label{eq:10a}
  \sum_{n=1}^\infty \bar{\vec c}_n t^n & = t[I-\bar B_1t]^{-1} \bar{\vec c}_1.
\end{align}
In the above calculation of geodesic generating functions for link-regular graphs, the only information that we have used is the number $m$ of nodes in $\Gamma$ and the numbers $L_1,L_2,L_3,\dotsc$.
The information contained in this data is the same as the information contained in the clique polynomial of $\Gamma$ because of the identities
\begin{equation*}
  nc_n = c_{n-1}L_{n-1} \text{ for } n = 2, 3, 4, \dotsc.
\end{equation*}

Thus we recover Theorem~5.1 of Antol\'in and Ciobanu \cite{1203.2752}:
\begin{theorem}
  If two link-regular graphs $\Gamma_1$ and $\Gamma_2$ have the same clique polynomial, then there is an equality of associated geodesic generating functions:
  \begin{gather*}
    \ggm[\Gamma_1] = \ggm[\Gamma_2]\\
    \graag[\Gamma_1] = \graag[\Gamma_2]\\
    \gracg[\Gamma_1] = \gracg[\Gamma_2].
  \end{gather*}
\end{theorem}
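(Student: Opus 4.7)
The plan is to verify that the combinatorial data feeding into the rational expressions for $\sum\bar{\vec m}_n t^n$, $\sum\bar{\vec a}_n t^n$ and $\sum\bar{\vec c}_n t^n$ of Section~\ref{sec:link-regular-graphs} is determined entirely by the clique polynomial. Once this is established, identical clique polynomials force identical generating functions, and summing coordinates yields the three claimed equalities.

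First I would catalogue the data. The diagonal matrix $\bar D = \mathrm{diag}(1,2,\dotsc,d)$ is determined once we know $d=\deg p_\Gamma$; the initial vectors $\bar{\vec m}_1=\bar{\vec c}_1 = (m,0,\dotsc,0)^\top$ and $\bar{\vec a}_1=2\bar{\vec m}_1$ depend only on $m$; and the matrix $\bar B_1$ has entries built from the numbers $F_j(i-1)$, which the proof of Lemma~\ref{lemma:branching-reg} expresses as
\[
F_k(r) = \sum_{s=r}^{k}(-1)^{s-r}\binom{k-r}{s-r}\bigl(L_s - (k-s)\bigr).
\]
Hence $\bar B_1$ depends only on the list $L_0, L_1, \dotsc, L_d$ of link cardinalities, where $L_0=m$ and $L_d=0$ by convention (a maximal clique has empty link).

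Next I would recover $m$ and the $L_r$ from $p_\Gamma(t) = 1+c_1 t+\dotsb+c_d t^d$. Trivially $m=c_1$ and $d$ is the degree. For the intermediate link sizes I would invoke the identity $n c_n = c_{n-1} L_{n-1}$ displayed at the end of Section~\ref{sec:link-regular-graphs}, proved by a double count of the set of pairs $(C,C')$ with $C$ an $n$-clique and $C'\subset C$ an $(n-1)$-subclique: choosing $C$ first and then deleting a vertex gives $n c_n$ pairs, while choosing $C'$ first and then adjoining any vertex of $\lk(C')$ gives $c_{n-1} L_{n-1}$ pairs by link-regularity. Since $c_r>0$ for every $r\le d$, this yields $L_{n-1}= n c_n/c_{n-1}$ for all $n$ in the relevant range, so $L_1,\dotsc,L_{d-1}$ are all determined by $p_\Gamma$.

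Combining these two steps, two link-regular graphs $\Gamma_1$ and $\Gamma_2$ with $p_{\Gamma_1}=p_{\Gamma_2}$ produce identical $\bar D$, $\bar B_1$, and initial vectors, so the vector-valued generating functions match term by term. Summing the coordinates and adding the constant $1$ contributed by the identity element (equivalently, left-multiplying by the all-ones row vector and adding $1$) yields the three equalities $\ggm[\Gamma_1]=\ggm[\Gamma_2]$, $\graag[\Gamma_1]=\graag[\Gamma_2]$, and $\gracg[\Gamma_1]=\gracg[\Gamma_2]$. The only subtlety worth watching is a bookkeeping one, namely confirming that every $L_r$ invoked by $\bar B_1$ is well defined and recoverable; this is precisely where link-regularity enters, and it causes no trouble since the needed indices lie in $\{0,1,\dotsc,d\}$ with $c_r>0$ for $r\le d$. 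In this sense the theorem is mostly an unwinding of the formulas already in place, with the counting identity $nc_n=c_{n-1}L_{n-1}$ as the essential bridge.
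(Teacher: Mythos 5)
Your proposal is correct and follows essentially the same route as the paper: both argue that the matrices $\bar D$, $\bar B_1$ (via Lemma~\ref{lemma:branching-reg} and the $F_k(r)$, hence the $L_r$) and the initial vectors depend only on $m$ and the link cardinalities, which are equivalent data to the clique polynomial through the identity $nc_n = c_{n-1}L_{n-1}$. You merely supply a bit more detail than the paper (the double-count proving that identity and the observation that $c_r>0$ for $r\le d$), which is a welcome but not a different argument.
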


\subsection*{Acknowledgments} J.S.A. thanks The Institute of Mathematical Sciences, Chennai for its hospitality.
\bibliographystyle{alpha}
\bibliography{references}
\end{document}